\newtheorem{theorem}{Theorem}
\newtheorem{corollary}{Corollary}
\title{Hyperbolic Staircases: Periodic Paths on $2g+1$--gons}
\author{Mei Rose Connor, Diana Davis, Paige Helms, Michael Kielstra, \\
Samuel Leli\`evre,  Zachary Steinberg, and Chenyang Sun}
\begin{document}

\maketitle

\section{Introduction}

Suppose we are playing billiards on a table with straight edges. If we place a ball anywhere on the table and choose a direction to hit the ball in, the ball will travel in a straight line over time until it hits a wall and bounces off.  If we extend this infinitely far in both directions, the polyline traced across the table is called a \textit{billiard ball trajectory}.  If the ball returns to its starting point and direction after travelling a finite distance, we call the trajectory \textit{periodic}.  This leads naturally to the question of how to characterize the set of all periodic trajectories on a billiard table.

\begin{figure}[ht]
    \centering
    \includegraphics[width=0.4\textwidth]{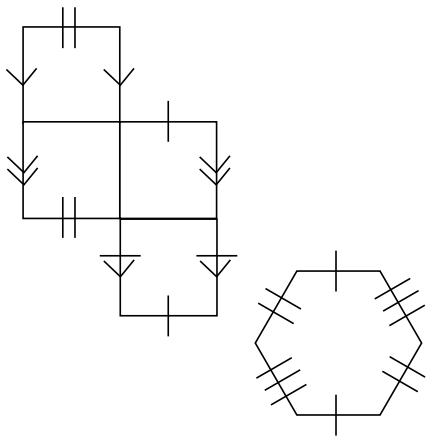}
    \caption{Two examples of translation surfaces. Edges with the same symbol are glued together. Note that the polygons in a translation surface need not be squares, have an even number of sides, or even regular polygons at all or other $2n$--gons.}
    \label{fig:sts}
\end{figure}

Rather than consider bent lines on two-dimensional Euclidean tables, we may ``unfold" the table and consider straight lines.  Every time we would reflect the trajectory at the edge of a table, we instead reflect the table across the edge and attach the reflected copy to the original.  Then, we can continue the trajectory into the reflected table along a straight line.  Furthermore, if we reflect a table twice in the same direction, we get back where we started, so we might as well have simply translated the position of our billiard ball by some constant amount.  This motivates the study of trajectories on \textit{translation surfaces} (see figure \ref{fig:sts}), which, following Wright, we define to be an equivalence class of polygons in the plane %(either $\mathbb{R}^2$ or $\mathbb{C}$ -- which you choose does not matter for our purposes)
with parallel edges identified.  
%Each translation surface is a set of polygons together with a pairing of edges of equal length that are considered to be ``on opposite sides."  
%We identify points along the paired edges.  
For a classic example of a translation surface, consider the unit square $I^2$ in the plane with the left and right edges identified and the top and bottom edges identified; this surface is a torus and has a single polygonal region. We consider two collections of polygons to be equivalent if we can use cut-and-paste relations to go from one to the other; that is, we can begin with one collection of polygons and transition to the second collection by cutting individual polygons along straight lines and forming two new edges which are paired with each other, and then translating and re-gluing the identified edges together. 

As translation surfaces are embedded in the plane, we can apply any affine transformation of the plane to a translation surface. The collection of all 
area-preserving, affine transformations of the plane are called M\"obius transformations, and are of the form $$\left\{\begin{bmatrix}  a & b \\ c & d \end{bmatrix} | 
a,b,c,d \in \mathbb{R}, ad-bc=1\right\}.$$ In other words, translation surfaces carry an action of $SL(2, \mathbb{R})$.

This gives us a clear way to understand the cut-and-paste relations. Applying the matrix $\begin{bmatrix} 0 & -1 \\ 1 & 0 \end{bmatrix}$ to the square torus $I^2 \subset \mathbb{R}^2$, skews the square torus into a parallelogram. However, this parallellogram can be cut vertically into two triangles, which can be rearranged and glued along identified edges to recover the original square torus.
From Hubert \cite{Hubert}, we define the \textit{Veech group} of a translation surface to be the matrix subgroup of $SL(2, \mathbb{R})$ which preserves the surface up to cut-and-paste relations. 

A \textit{lattice} is a discrete, cocompact subgroup of $SL(2, \mathbb{R})$. If the Veech group of a translation surface forms a lattice in $SL(2, \mathbb{R})$ then we say that the translation surface is Veech. A \textit{cylinder} is a set of parallel trajectories (straight-line paths) on a translation surface which traverse the same sequence of edges, up to a cyclic permutation.  Veech surfaces have a number of pleasant properties, the most important of which involve cylinders. Most importantly for our purposes, any Veech surface decomposes into a union of cylinders, which implies that whether or not a trajectory on a Veech surface is periodic depends solely on the slope of the trajectory and not on the starting point. For example, on a square billiard table, every trajectory with the same slope bounces off the same walls, and this observation can be used to show the periodic trajectories on a square are exactly those with rational slopes.

The problem is also solved for all billiard tables shaped like regular polygons with more than four sides, although this is more difficult. In particular, the solution for the pentagon involves first deforming the regular pentagon into a rectilinear shape where slopes are easier to describe. All periodic slopes on this rectilinear shape are exactly of the form $a+b\phi$ with $a,b \in \mathbb{Q}$, where $\phi = \frac{1+\sqrt{5}}{2}$ is the golden ratio. However, two different papers have arrived at this answer about pentagonal billiard trajectories in two seemingly different ways.

The approach by Davis, Fuchs, and Tabachnikov \cite{DFT} relies on hyperbolic geometry. They represent slopes as points on the boundary of the Poincar\'e disk, and then show that all periodic slopes can be generated from a few simple slopes using various hyperbolic reflections. They show four possible ways to use a combination of hyperbolic reflection and rotation to create new points which also correspond to periodic slopes.

A different approach, by Davis and Lelièvre \cite{DL}, involves unfolding the pentagon into a translation surface. They create a series of successively more convenient translation surfaces - first a ``necklace", then a ``double pentagon", then a ``golden L" - and completely classifying all periodic trajectories on the golden L. From there we get four new ways of generating new periodic slopes given existing periodic slopes: four new matrices which take direction vectors to periodic direction vectors.

\begin{figure}[ht]
    \centering
    \includegraphics[width=0.7\textwidth]{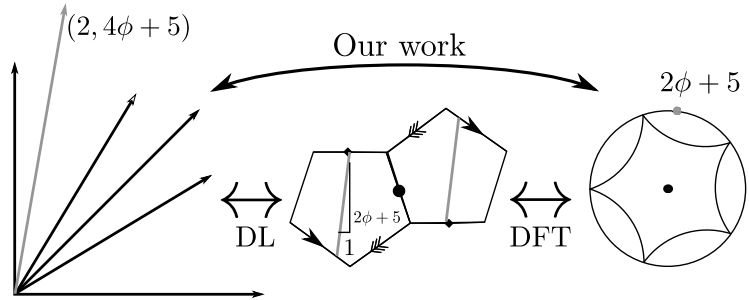}
    \caption{\cite{DFT} studies periodic trajectories using points on a Poincaré disk stereographically projected to a number line. \cite{DL} studies periodic trajectories using a first quadrant cut into four segments based on line segments derived from a translation surface. We prove that these two approaches are, in fact, isomorphic.}
    \label{fig:DFTandDLandOurWork}
\end{figure}

This paper will prove a natural equivalence between the two approaches. This allows the use of techniques from both papers to study paths on the pentagon, and offers a promising avenue for studying billiard trajectories on other regular polygons.

\subsection{The Davis-Fuchs-Tabachnikov Approach}

\begin{figure}[!ht]
    \centering
    \includegraphics[width=0.6\textwidth]{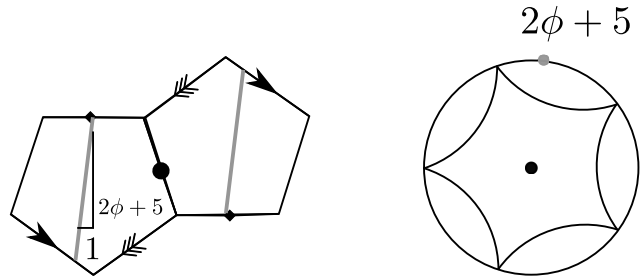}
    \caption{\cite{DFT} associates every possible slope of a periodic trajectory on a double pentagon, right, with a point on the boundary of the Poincar\'e disk, left. New periodic trajectories are generated by transforming Poincar\'e disk in a way that sends periodic slopes to new periodic slopes.}
    \label{fig:howDFTworks}
\end{figure}

Davis, Fuchs, and Tabachnikov classify billiard paths based on the slope of the initial direction in \cite{DFT}.  There is a canonical map between directions and points on a circle: each direction is mapped to the point where a ray in that direction starting at the center of the circle intersects the boundary.  The authors of \cite{DFT} therefore pass to considering points on a circle.

\begin{figure}[ht]
    \centering
    \includegraphics[]{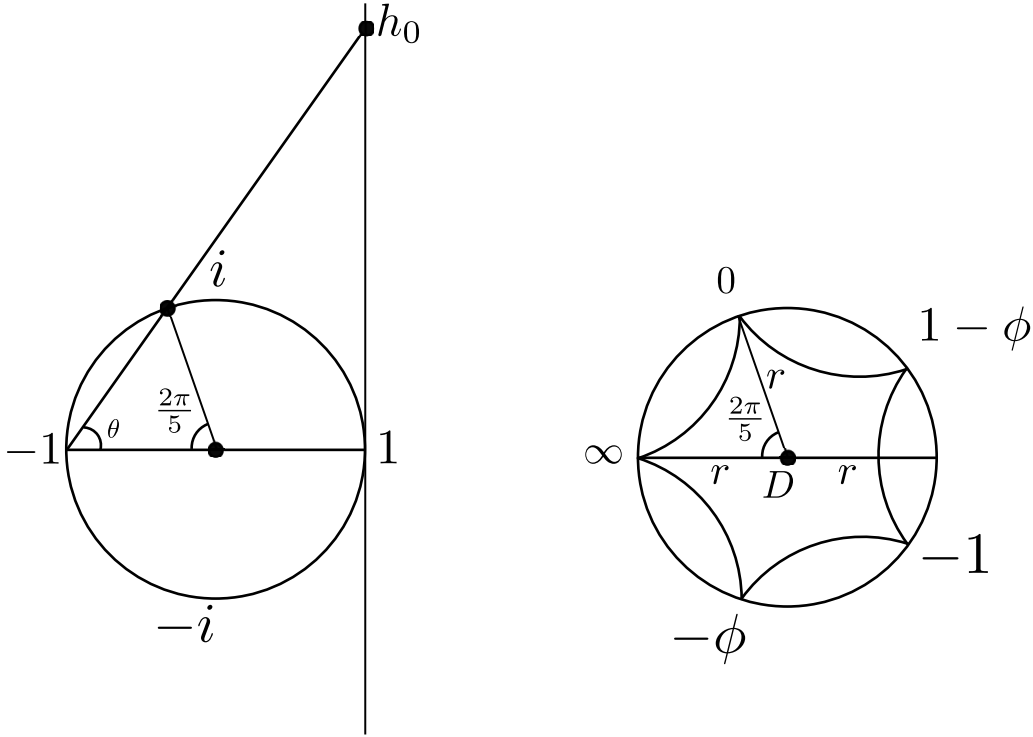}
    \caption{Left, a stereographic projection from the Poincar\'e disk to a line. The point $e^{i\frac{3\pi}{5}}$ on the boundary of the Poincar\'e disk is sent to the number $h_0$. Right, one particularly convenient stereographic projection used throughout this paper, which translates between hyperbolic land and staircase land, with points on the disk boundary labeled according to where they are sent under the projection. \\
    Points on the boundary of the Poincar\'e disk used in \cite{DFT} are mapped exactly to the slopes of their direction vectors used in \cite{DL}.  For example, the point $-1$ is sent to $\infty$. Of note is the fact that here we change the location of the zero point of the number line, so that it lines up with the projected location of one of the points of the pentagon.}
    \label{fig:proj}
\end{figure}

Their next task is to assign numerical values to specific points on that circle.  They work in $\mathbb{R} \cup \{\infty\}$, first choosing some point arbitrarily to be sent to $\infty$.  Then they set up a real number line tangent to the circle at the point directly opposite the point sent to $\infty$. A point $z$ on the circle recieves an associated real number by drawing a line between the point sent to $\infty$ and $z$ and seeing where it intersects the number line. This is called a \textit{stereographic projection}. More information can be found in \cite{Lee_2010}. 
%Introduction to Topological Manifolds, page 56 section 3.2, author John M. Lee, Springer, ISBN 978-1-4419-7939-1 2010

To complete the setup, the authors consider the circle to be the boundary of a Poincaré disk model of the hyperbolic plane, where all the points to which the stereographic projection associates numbers are points at infinity of the hyperbolic plane. The authors claim that, by starting with a few of these points on the boundary of the Poincaré disk and applying various hyperbolic transformations, it is possible to generate all points corresponding to slopes of periodic trajectories on the \textit{double pentagon}, a Veech translation surface consisting of two pentagons with parallel sides identified.  From this, they can find the  slopes of periodic billiard paths on the pentagon.

Specifically, the point $\infty$ corresponds to a trajectory with infinite slope, that is, a vertical trajectory.  This is periodic on the double pentagon, so the authors start with it.  They then consider the Veech group of the double pentagon.  It is generated by two things: first, a rotation of $2\pi/5$, and second, a horizontal Dehn twist.  By considering what each of these do to points at infinity, they may be extended to, or viewed as, unique isometries of the Poincaré disk.  The rotation of $2\pi/5$ in the world of the pentagon becomes just that as a disk isometry, which immediately allows us to generate four more periodic slopes by rotating $\infty$ four times.  These five vertices form a hyperbolic pentagon.

Figure \ref{fig:proj} shows most of this, although, for ease of readability, only one edge of the hyperbolic pentagon is shown.  The point marked $0$ is projected to $h_0$ on the real line, and the angle $\theta$ is marked as the angle which must be calculated (by noting that triangle $0D\infty$ is isosceles) to determine where $h_0$ is.

Finally, Davis, Fuchs, and Tabachnikov use this hyperbolic construction to generate all periodic trajectories, as shown in Figure \ref{fig:hyperbolicmorepoints}. They calculate that the second generator of the Veech group, the Dehn twist, corresponds to a reflection in a particular edge of this hyperbolic pentagon.  Let $R$ be the reflection and $T$ be the rotation.  Then the authors consider the operators $RT^m$ for $m \in \{1, 2, 3, 4\}$.  (On the points at infinity, viewed as points on the real line via the stereographic projection, these work out to be linear fractional transformations.)  The effect of these operators on points in the privileged sector into which $R$ reflects all the others is particularly interesting: it consists of taking a point, rotating it into some other sector, and then reflecting it back into its original sector.  In this way we can build up a tree of directions, starting with a single point in the sector and repeatedly applying one of these four operators.

\begin{figure}[H]
    \centering
    \includegraphics[width=0.7\textwidth]{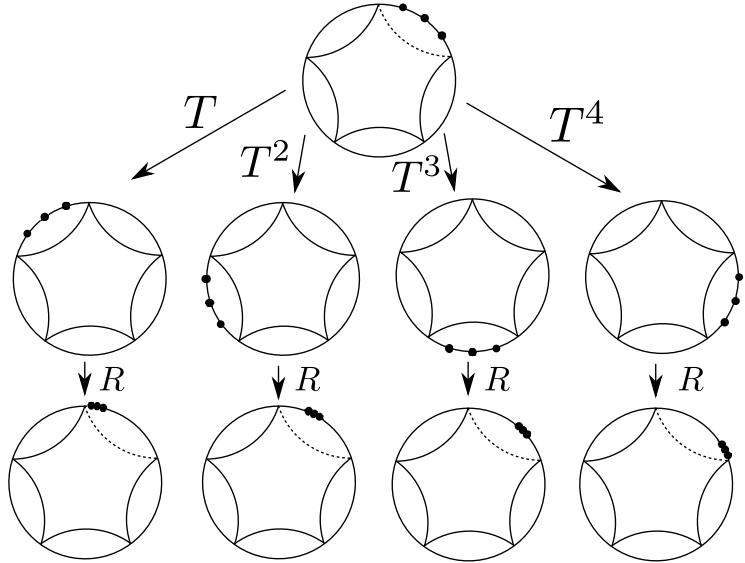}
    \caption{How \cite{DFT} generates new periodic points from old ones. Every possible slope of a trajectory, from $0$ to $\infty$, is assigned a point in the top-right arc. Each combination of $RT^i$, where $T$ is a rotation by $\frac{2\pi}{5}$ and $R$ is a hyperbolic reflection over the top right edge of the pentagon, sends points representing periodic slopes to other points representing periodic slopes.}
    \label{fig:hyperbolicmorepoints}
\end{figure}

Furthermore, since any periodic trajectory on the pentagon corresponds to a line in the tiling of the hyperbolic disk with pentagons, and since every application of $R$ to the vertices of a hyperbolic pentagon generates a new hyperbolic pentagon, we can get all periodic trajectory slopes by applying $RT^m$ to known ones, because by continually applying $RT^m$ we get all vertices of a tiling of the hyperbolic plane by pentagons and hence all directions of saddle connections on that tiling.

\subsection{The Davis-Lelièvre Approach}  %make this clickable link
Davis and Leli\`evre \cite{DL} use flows on translation surfaces as a proxy for studying billiards on a pentagon. On a translation surface, represented by a polygon with edge identifications, trajectories that enter one edge emerge from the edge's identified pair in the same direction. Some examples of translation surfaces are a hexagon with opposite edges identified and the square--tiled surface seen in Figure \ref{fig:sts}. The final surface of interest can be represented by a rectilinear shape known as the ``Golden L", which is obtained by transforming the original pentagon into two intermediate surfaces: the ``necklace" and the double pentagon. 

To get the ``necklace", start with a regular pentagon and reflect it repeatedly to obtain 10 copies of the original pentagon that form a ring, as shown in Figure \ref{fig:doublepentagon}. This process is known as ``unfolding". Identifying parallel edges with the same label, the ``necklace" becomes a genus 6 surface with 5 vertices; a billiard trajectory on the original pentagon becomes a smooth trajectory on the new surface. We can then fold the necklace back in on itself to get a translation surface made out of two pentagons with opposite sides identified. Straight-line trajectories on the necklace become broken straight-line trajectories (that is, straight-line trajectories that cannot necessarily be drawn in the plane as a single straight line) on the double pentagon.

To further simplify the double pentagon, one may apply a matrix shear to alter some angles, and then use cut-and-paste operations to rearrange the resulting sheared double pentagon into an $L$-shaped polygon; the general process for any odd-sided polygon is discussed in section \ref{sec:staircase}. The resulting polygon is easier to study in many ways, being a union of rectangles. The reason such a transformation is permissible is that shearing both the surface and the trajectory, as well as cutting and pasting, does not alter the trajectory \textit{in relation to} the surface.\footnote{Note that this statement applies to flows on surfaces, but not billiard trajectories due to preservation of bounce angle and a lack of cut-and-paste equivalence for billiard tables. For a counterexample, take a direction parallel to an edge of a square: shearing both the square and the direction wildly alters the behavior of the resulting billiard trajectory.} More precisely stated as Corollary 2.4 in \cite{DL}: a direction on the double pentagon is periodic if and only if its sheared image is periodic on the Golden L.
    
\begin{figure}[ht]
    \centering
    \includegraphics[width=0.7\textwidth]{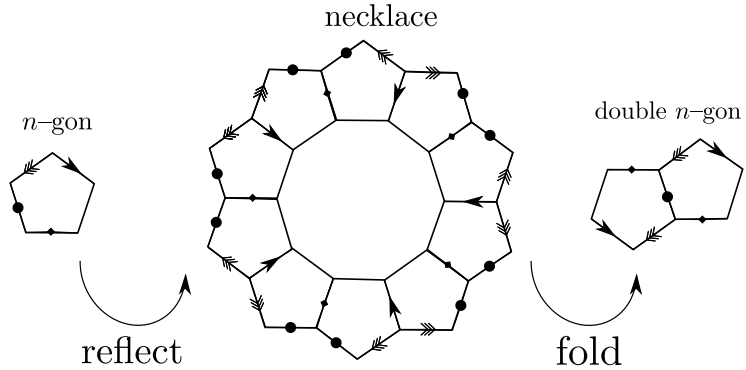}
    \caption{Construction of the ``double pentagon" translation surface from a single pentagon. The pentagon is first ``unfolded" to a ``necklace" through successive reflections, and then projected down to the double pentagon as a 5-fold cover. Under this transformation, billiard trajectories on the original pentagon, are not smooth at edges due to reflection, become smooth trajectories on the double surface. Edges with the same symbol and orientation are identified.}
    \label{fig:doublepentagon}
\end{figure}

Once the Golden L is obtained, one can immediately find five directions that yield periodic trajectories: orienting the L as one normally does, these directions are given by vectors starting at the lower-left vertex, directed at each of the other 5 vertices. Normalizing the central square to have unit side length, the vectors, with the same directions as those shown in figure \ref{fig:pentagonvectors}, are explicitly $\begin{bmatrix}1 \\ 0\end{bmatrix}$,$\begin{bmatrix}1 \\ \phi\end{bmatrix}$,$\begin{bmatrix}\phi \\ \phi\end{bmatrix}$,$\begin{bmatrix}\phi \\ 1\end{bmatrix}$, and $\begin{bmatrix}0\\1\end{bmatrix}$.

These five vectors naturally partition the first quadrant $\Sigma$ into four sectors. The sectors can be viewed as the image of $\Sigma$ under the matrices 
\[\sigma_0 = \begin{bmatrix}1&\phi\\0&1\end{bmatrix},
\sigma_1 = \begin{bmatrix}\phi&\phi\\1&\phi\end{bmatrix},
\sigma_2 = \begin{bmatrix}\phi&1\\\phi&\phi\end{bmatrix},
\sigma_3 = \begin{bmatrix}1&0\\\phi&1\end{bmatrix},\]
where the columns of the matrices are adjacent pairs of the above vectors.

\begin{figure}[ht]
    \centering
    \includegraphics[width=0.5\textwidth]{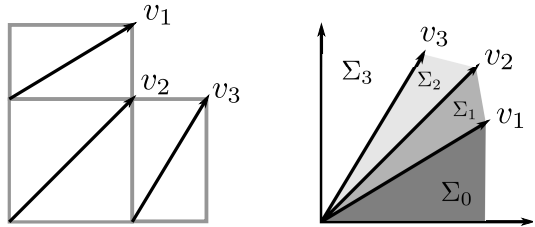}
    \caption{Vectors with the same directions as those used in \cite{DL} and how they are constructed from the golden L. Note that translating the vectors to start at zero visually sorts them by slope. The authors of \cite{DL} then construct matrices $\sigma_i$ which each send the region $\Sigma_i$ to the entire first quadrant. 
    In later sections, we generalize this approach to odd $n > 5$}
    \label{fig:pentagonvectors}
\end{figure}

All this construction culminates in a variety of deep and interesting results, one of which is Theorem 2.11 in \cite{DL}, also known as the Tree Theorem: every periodic direction on the Golden $L$ can be generated by taking $\begin{bmatrix}1\\0\end{bmatrix}$ and applying some finite sequence of matrices $\sigma_i$.

\section{Results}
\subsection{Summary}

\cite{DL} presents a set of four transformations $\{\Sigma_i\mid 0\le i\le 3\}$ which take periodic directions and generate new periodic directions. Similarly, \cite{DFT} presents a set of four transformations $\{RT^m\mid 0\le m\le 3\}$ which take points on the Poincar\'e disk representing periodic slopes to periodic slopes.

We demonstrate that these two methods are fundamentally the same. In addition, we produce a change of coordinate systems showing that the two papers' methods generate the same slopes of periodic trajectories in the same order via the same operations. We make this new coordinate system explicit and justify its use.

At the same time, we generalize the golden L construction from \cite{DL} to a ``staircase" construction for arbitrary $(2g+1)$-gons. By using results from the hyperbolic construction, via our new equivalence, we are able to calculate the slopes of the diagonals of the generalized staircase without carrying out complicated trigonometry on the rectangular surface.

In the process, we describe a few trigonometric identities, including a sequence which arises from a variant of the Chebyshev polynomials of the first kind.

%Our work is supported by experimental and numerical evidence.

\subsection{For the Pentagon}

We will first show how to turn the periodic points on the hyperbolic plane found by \cite{DFT} into the periodic trajectory vectors in $\mathbb{R}^2$ found by \cite{DL}. This takes the form of a bijective stereographic projection (the one shown in figure \ref{fig:proj}) from the hyperbolic geometry used in \cite{DFT} to slopes in $\mathbb{R}$.

We will now show how to construct this stereographic projection, which we will call $f$. Draw a pentagon inscribed the complex unit circle and consider two of its adjacent vertices $v_0$ and $v_1$.

A stereographic projection is fully specified by the data of three pairs of points and where they are sent. Like the construction in \cite{DFT}, our projection will project from the point $v_0$, meaning $f(v_0) = \infty$. However, instead of sending $v_1$ to a positive number, we translate the origin of the number line post-projection so that $f(v_1) = 0$. This means our projection wraps half of $\mathbb{R}$ into one-fifth of the border of the circle.

Finally, the point halfway on the arc between $v_0$ and $v_1$ is sent to $1$, which can be interpreted geometrically as scaling the circle's radius.

The transformation 
\[f(z) : \{ z \in \mathbb{C} \mid |z| = 1\} \to \mathbb{R} \cup \infty\] is explicitly defined as follows. First, fix the radius of the circle used in the stereographic projection,
\[r = \frac{1}{2} \left(\frac{\tan(\frac{\pi}{n})\tan(\frac{\pi}{2n})}{\tan(\frac{\pi}{n}) - \tan(\frac{\pi}{2n})}\right).\]

By placing a pentagon in the complex unit disk  with one vertex at $-1$, and stereographically projecting from $-1$, the transformation may then be expressed as
\[
f(z) = 2r\frac{\Im(z)}{1+\Re(z)}-\frac{1+\sqrt{5}}{4},
\]
where the $-(1+\sqrt{5})/4$ adjustment is done to ensure a certain vertex of the pentagon is sent to $0$. One can see by direct computation that $f(e^{\frac{3i \pi}{5}}) = 0$ and $f(-1) = \infty$.

\bigskip

This function will allow us to translate between the hyperbolic approach of \cite{DFT} and the staircase approach of \cite{DL}.

\begin{theorem}\label{thm:poincare}
    The points on the boundary of the Poincaré disk that are:
    \begin{enumerate}
        \item in the orbit of the vertices of the pentagon under the action of rotation by $2\pi/5$ and reflection in the edges of the pentagon, and
        \item mapped to positive numbers by our new stereographic projection
    \end{enumerate}
    are in fact mapped exactly to the slopes of trajectories on the golden L.  Furthermore, the group element which generates each point from our ``starting vertices" corresponds to the position of the trajectory associated to this slope in the tree of trajectories laid out in \cite{DL}. 
\end{theorem}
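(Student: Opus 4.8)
The plan is to work entirely in slope coordinates $\mathbb{R}\cup\{\infty\}$ and to show that $f$ simultaneously intertwines the two generating processes, root with root and generator with generator; the tree correspondence then follows by a single induction on word length. Both constructions can be viewed as actions on $\mathbb{R}\cup\{\infty\}$ by linear fractional transformations: a Davis--Leli\`evre matrix $\sigma_i=\begin{bmatrix}a&b\\c&d\end{bmatrix}\in SL(2,\mathbb{R})$ acts on a slope $s$ by $s\mapsto (c+ds)/(a+bs)$, while the hyperbolic isometries $T$ (rotation by $2\pi/5$) and $R$ (reflection in the distinguished edge), conjugated through $f$, also become real linear fractional transformations, as noted above. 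The theorem is thereby reduced to a finite set of identities between these transformations plus a routine inductive argument.

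First I would verify that the roots match. The five ``starting vertices'' of the hyperbolic picture are the orbit of $\infty$ (the point $-1$) under $T$, that is, the five pentagon vertices; a direct computation with the explicit formula for $f$ shows these are sent to the five bounding slopes $0,\ 1/\phi,\ 1,\ \phi,\ \infty$ of the Davis--Leli\`evre sectors, extending the evaluations $f(-1)=\infty$ and $f(e^{3i\pi/5})=0$ already recorded. In particular the tree root is sent to the slope of $\begin{bmatrix}1\\0\end{bmatrix}$, matching the DL root, and the privileged hyperbolic sector into which $R$ reflects is sent to the first quadrant $\Sigma$.

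Next comes the heart of the argument, the generator correspondence. I would compute $f\circ T\circ f^{-1}$ and $f\circ R\circ f^{-1}$ explicitly as linear fractional transformations of $\mathbb{R}\cup\{\infty\}$: the rotation becomes an elliptic transformation with complex-conjugate fixed points, the standard form of a circle rotation under stereographic projection, and the reflection becomes the real transformation fixing the two endpoints of the distinguished edge. From these one identifies $f\circ(RT^m)\circ f^{-1}$, for each $m\in\{1,2,3,4\}$, with the slope-action of the corresponding $\sigma_{i(m)}$ under an explicit bijection $i\colon\{1,2,3,4\}\to\{0,1,2,3\}$. Granting the root and generator identities, an induction on the length of the generating word shows that the point reached in the hyperbolic tree by a word $RT^{m_1}\cdots RT^{m_k}$ is sent by $f$ to the slope reached in the DL tree by $\sigma_{i(m_1)}\cdots\sigma_{i(m_k)}$; this is exactly the ``Furthermore'' clause, that $f$ matches each point to the trajectory occupying the same position in the tree. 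Completeness on positive slopes then follows by combining the DL Tree Theorem, which says every periodic golden-L direction arises from such a word, with the fact from \cite{DFT} that the $RT^m$ orbit fills out every vertex of the pentagon tiling, so that no periodic slope is missed on either side.

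The step I expect to be the main obstacle is pinning down $f\circ R\circ f^{-1}$ and the bijection $i(m)$ with the correct bookkeeping. Although $R$ is orientation-reversing as a hyperbolic isometry while each $\sigma_i$ lies in $SL(2,\mathbb{R})$, the restriction to the relevant arc together with the orientation of $f$ must be tracked so that the composite $RT^m$ lands as an orientation-preserving action on positive slopes; reconciling this sign, identifying exactly which edge plays the role of the distinguished reflection, and confirming that $R$'s ``reflect back into the privileged sector'' operation coincides with $\sigma_i$'s ``send the sector to the whole quadrant'' are where the real care is required. Once these explicit transformations are in hand, everything else is a short computation or a formal induction.
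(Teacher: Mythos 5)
Your proposal follows essentially the same route as the paper's proof: compute where $f$ sends the pentagon's vertices, express $T$ and $R$ as linear fractional transformations in the projected coordinate, identify each $RT^m$ with a determinant-one matrix acting on slope vectors, match these matrices with the $\sigma_i$, and let the word-by-word tree correspondence follow. One detail you assert is false, although it does not sink the argument: the five pentagon vertices are \emph{not} sent by $f$ to the sector boundaries $0$, $1/\phi$, $1$, $\phi$, $\infty$. The paper's computation places them at $\infty$, $0$, $1-\phi$, $-1$, $-\phi$, so three of them land at negative numbers; the positive boundaries $1/\phi$, $1$, $\phi$ are the images of those three vertices under $R(x)=-x$, i.e.\ vertices of the adjacent reflected pentagon in the tiling rather than of the original pentagon (which is why the theorem statement speaks of the orbit under rotation \emph{and} reflection). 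This is precisely the sign and orientation bookkeeping you flag at the end as the main obstacle; once it is corrected, your computation of $f\circ RT^m\circ f^{-1}$ recovers the paper's identities $S_m=RT^m=\sigma_{m-1}$, so your bijection is simply $i(m)=m-1$, and the rest of your outline (root matching at $0$, induction on word length, completeness via the Tree Theorem of \cite{DL} and the tiling argument of \cite{DFT}) coincides with what the paper does.
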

\begin{proof}
    From now on, we will dispense with talk of ``the point $z$ on the boundary of the hyperbolic plane mapped to $x$ by the stereographic projection $f$" and refer to the point $z$ by its label $x=f(z)$, directly identifying the disk's boundary with $\mathbb{R} \cup \{\infty\}$. Therefore, when we talk about ``the edge between 0 and $\infty$" we mean ``the arc on the hyperbolic plane between the points $z_0$ and $z_1$ where $f(z_1) = 0$ and $f(z_1) = \infty$".
    
    We are told in \cite{DFT}'s characterization of the pentagon's Veech group that the allowed operations on points on the edge of the disk are:
    \begin{enumerate}
        \item Reflection in one edge of the pentagon (which, in particular, we take to be the edge between $0$ and $\infty$), and
        \item Rotation of the entire Poincar\'e disk by $2\pi/5$.
    \end{enumerate}
    These operations are called $R$ and $T$ respectively, and, since both are isometries of the disk, both are linear fractional transformations.
    
    In the geometry which follows, we use the point labels from figure \ref{fig:proj}. 
    
    When constructing $f$ we chose two vertices of the pentagon and defined $f$ by sending them to specific values. We should first determine where $f$ sends the other three vertices of the pentagon.  To do this, we first find the position of $h_0$, the point such that $f(h_0)=0$, in figure \ref{fig:proj}. 
    
    We will now calculate $h_0$ using figure \ref{fig:proj} and some trigonometry. Let $D$ be the center of the hyperbolic disk. Since the triangle $D-0-\infty$ is isosceles, $\theta = \frac{1}{2}(\pi - 2\pi/5) = \frac{3\pi}{10}$, so the triangle with hypotenuse $\infty h_0$ has height $2r \tan (3\pi/10)$.
    
    Similarly, the point directly between $0$ and $\infty$ (which makes angle $2\pi/10$ with $D$ and $\infty$) should be sent to $2r \tan (2\pi/5)$.  But that point should also be sent to $1$.  This lets us calculate $r$, since we can set $$2r \tan (2\pi/5) - h_0 = 2r\left(\tan (2\pi/5) - \tan (3\pi/10)\right) = 1,$$ so $r = \frac{1}{4}\sqrt{\frac{1}{2}(5-\sqrt{5})}$.
    
    Armed with this, we can calculate the other three vertices: $$2r(\tan (i\pi/10) - \tan(3\pi/10))$$ for $i \in \{1, -1, -3\}$. This shows $f$ sends them to $1-\phi$, $-1$, and $-\phi$, in that order.
    
    Now recall that \cite{DFT} defined two operations on the hyperbolic plane, and hence on its boundary: $T(x)$, which rotates by $2\pi/5$, and $R(x)$, which reflects hyperbolically about the hyperbolic line between the points $0$ and $\infty$.  We can use our vertex values to write these as linear fractional transformations.  Specifically, we may take $T(x)$, rotation of the plane to be the unique LFT which sends each point to the previous: $$T(x) = \frac{-\phi x - 1}{x}.$$  Meanwhile, $R(x)$ exactly interchanges positive and negative points, and is its own inverse, so $R(x) = -x$.
    
    We define $S_m(x) = RT^m(x)$.  All points in the positive sector of the circle are of the form $S_aS_bS_c\dots(0)$.  (This is because each is generated by a string of $R$s and $T$s, and $RR$ is the identity.  We can start with only $0$ because we can get all our other ``starting vertices" by repeatedly using $T$ on $0$, and we have an $R$ as the furthest-left element because $T$ maps everything positive to something negative and $R$ maps everything negative to something positive, so they cancel each other out in this sense and ensure that the result is in the positive sector.)
    
    LFTs that operate on vector slopes can be directly associated with matrices that operate on vectors: the slope of $$\begin{pmatrix}a & b \\ c & d\end{pmatrix}\begin{bmatrix}x \\ y \end{bmatrix}$$ is exactly $$\frac{a+b(y/x)}{c+d(y/x)}.$$  To ensure that this map is a bijection, we insist that matrices associated with LFTs have determinant $1$.
    
    To calculate the matrix associated with $S_1$, we first note that $S_1 = \frac{1+\phi x}{x}$, so our matrix is $\begin{pmatrix}1 & \phi \\ 0 & 1 \end{pmatrix}$.  Similarly, $S_i = \sigma_{i - 1}$ for all $i$, where $\sigma_i$ is the matrix used in \cite{DL} to represent transformations into certain sectors of the golden L.
    
    In particular, we have demonstrated that $\sigma_{i-1}$ takes $\begin{bmatrix}1\\\alpha\end{bmatrix}$ to $S_i(\alpha)$.  Since scalar multiples commute with matrix multiplication, and since any vector of slope $\alpha$ is a scalar multiple of $\begin{bmatrix}1\\\alpha\end{bmatrix}$, we see that applying $\sigma_{i-1}$ to some vector $v$ gives a vector with the same slope as we would get by applying $S_i$ to the slope of $v$.  Therefore, for every product of $\sigma_{i-1}$s applied to $\begin{bmatrix}1\\0\end{bmatrix}$, the equivalent composition of $S_i$s applied to $0$ will generate the same slope.
\end{proof}

\subsection{Generalizing the Golden L}\label{sec:staircase}

As seen from \cite{DL}, many properties of the pentagon can be understood by studying a related translation surface, namely the Golden L. For regular $n$-gons with odd $n > 5$, we demonstrate how to construct a similar translation surface made of rectangles; a double heptagon ($n=7$) in Figure \ref{fig:doublengon} suffices for a visual example. For reasons to become clear later, we write $n=2g+1$ for some integer $g$.

\begin{figure}[ht]
    \centering
    \includegraphics[width=\textwidth]{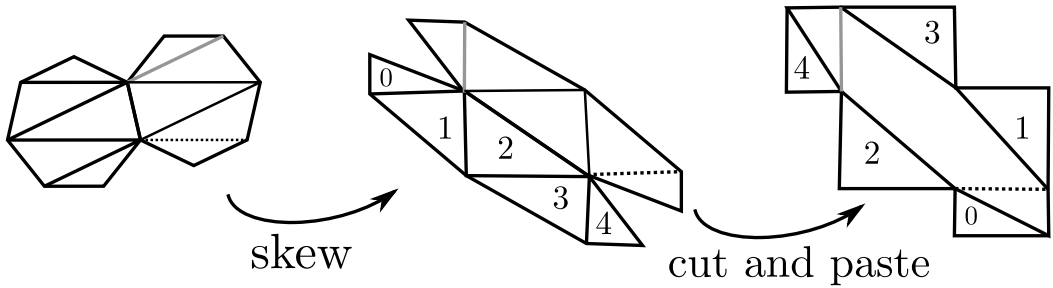}
    \caption{Construction of a staircase from a double $(2g+1)$-gon. Pairs of parallel sides are identified. For $g=2$ this staircase is the Golden L.}
    \label{fig:doublengon}
\end{figure}

We begin with a double $(2g+1)$-gon, and triangulate it in a zig-zag manner as shown, so that the two central triangles of the two polygons meet by the shared edge and form a rhombus. Note that this rhombus has two horizontal edges and two diagonal ones. Next, apply an affine transformation fixing the horizontal direction and taking the direction of the diagonal edges to vertical, so that the central rhombus becomes a rectangle. This transformation is known informally as a ``skew".

After the transformation, the original triangulation becomes one by right triangles. Observe the triangulation of the bottom-left polygon, and notice that by the central symmetry of the double polygon, every hypotenuse of the bottom-left triangulation corresponds to a hypotenuse of the upper-right triangulation. For each triangle in the bottom-left polygon, we translate it so that its hypotenuse matches its symmetric counterpart in the upper-right; note that the largest central triangle, in effect, is not moved. The resulting staircase-shaped translation surface (called the ``Golden L" for $g=2$ and ``Auramite W" for $g=3$) serves an analogous role in studying general double $(2g+1)$-gons as the Golden L in the double pentagon.

\subsection{Properties of the staircase surface}
\begin{figure}[ht]
    \centering
    \includegraphics[width=0.3\textwidth]{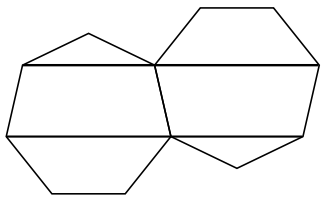}
    \caption{A cylinder decomposition of a double $(2g+1)$-gon..}
    \label{fig:heptagon-cylinders}
\end{figure}

\begin{theorem}
    A staircase surface constructed from a double $(2g+1)$-gon is Veech and decomposes into $g$ cylinders.
\end{theorem}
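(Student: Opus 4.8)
The plan is to treat the two assertions of the theorem separately: the Veech property will be transported from the double $(2g+1)$-gon, which is already known to be a lattice surface, while the cylinder count will be obtained by a direct combinatorial inspection of the decomposition in the horizontal direction.

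First, for the Veech property, I would argue by affine invariance. It is a classical theorem of Veech that the translation surface obtained by unfolding the billiard in a regular $(2g+1)$-gon — equivalently, the double $(2g+1)$-gon with parallel sides identified — is a lattice surface, its Veech group being the $(2,2g+1,\infty)$ triangle group; for $g=2$ this is exactly the double pentagon treated in \cite{DL}. The staircase is produced from this surface by the affine ``skew'' (an element $A\in GL^+(2,\mathbb{R})$ sending the diagonal edge direction to the vertical) followed by cut-and-paste rearrangement of the right triangles. Cut-and-paste leaves the translation surface unchanged as a point of moduli space, hence does not alter the Veech group, while applying $A$ merely conjugates it, $\mathrm{Veech}(A\cdot X)=A\,\mathrm{Veech}(X)\,A^{-1}$. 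Since conjugation by a fixed matrix carries lattices to lattices, the staircase inherits the lattice property. I would isolate this as a short lemma — that being Veech is an invariant of the $GL^+(2,\mathbb{R})$-orbit together with cut-and-paste equivalence — and then apply it.

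Second, for the cylinder count, I would exhibit the decomposition directly. Because the staircase is a union of rectangles with horizontal and vertical sides, the horizontal direction plainly carries closed geodesics, namely the core curves of the rectangles; slicing along the horizontal lines through the rectangle corners partitions the surface into maximal horizontal strips, each of which the edge identifications close up into a cylinder. (Alternatively, once Veech-ness is in hand, the Veech dichotomy forces any direction containing a closed geodesic to be completely periodic, giving the decomposition for free.) To count the cylinders I would pull the decomposition back through the cut-and-paste and the inverse of $A$ to the double $(2g+1)$-gon, where it becomes the decomposition in the direction of the two parallel horizontal edges of the central rhombus, the one depicted in Figure~\ref{fig:heptagon-cylinders}. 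The horizontal chords through the vertices cut each $(2g+1)$-gon into $g$ bands, and the central symmetry of the double polygon glues each band to its partner in the opposite copy, so the $2g$ bands assemble into exactly $g$ cylinders.

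The main obstacle I expect is this last step: verifying for all $g$ — not merely for the Golden L ($g=2$), where one reads off two cylinders immediately — that the identifications merge the bands into precisely $g$ cylinders, neither fewer (a band failing to close up on its own) nor more. This is where I would be most careful, tracking the gluing pattern induced by the zig-zag triangulation explicitly and confirming that each cylinder meets each of the two polygons in exactly one band. The Veech half, by contrast, is essentially formal once the affine-invariance lemma is in place.
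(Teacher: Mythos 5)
Your proposal is correct and follows essentially the same route as the paper: Veech-ness is inherited because the staircase differs from the (known Veech) double $(2g+1)$-gon only by an affine skew and cut-and-paste, and the cylinder count comes from pulling the horizontal decomposition back to the double polygon and counting the $g$ bands cut out by horizontal chords through the vertices (the paper phrases this as pairing each cylinder with two vertices, leaving the apex over, giving $\frac{2g+1-1}{2}=g$). Your version is more careful about the two points the paper leaves implicit — that the Veech group is only conjugated by the skew, and that the bands actually close up into exactly $g$ cylinders — but the underlying argument is the same.
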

\begin{proof}
      The double $(2g+1)$-gon is Veech, so, after a uniform skew and a cut-and-paste operation, the result, the staircase surface, is also Veech.  Furthermore, such operations do not change the number of cylinders.  Consider a cylinder decomposition parallel to one side.  Then, as in figure \ref{fig:heptagon-cylinders}, we can pick one $(2g+1)$-gon in the double $(2g+1)$-gon and count the cylinders that go through it.  Each can be paired with two vertices of the $(2g+1)$-gon, the lowest two that it touches, leaving one vertex left over at the top.  Therefore the surface has $\frac{2g+1-1}{2}=g$ cylinders.
\end{proof}

\begin{figure}[ht]
    \centering
    \includegraphics[width=0.3\textwidth]{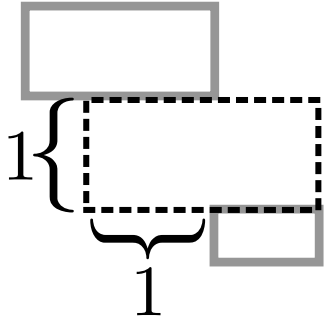}
    \caption{The cylinders for the horizontal flow along a staircase surface. Each rectangle has the same aspect ratio, given by Theorem \ref{theorem:staircasecylinders}}
    \label{fig:staircasecylinders}
\end{figure}

This kind of trigonometry generalizes to the general $n$-gon, for $n$ odd.

\begin{figure}[ht]
    \centering
    \includegraphics[width=0.5\textwidth]{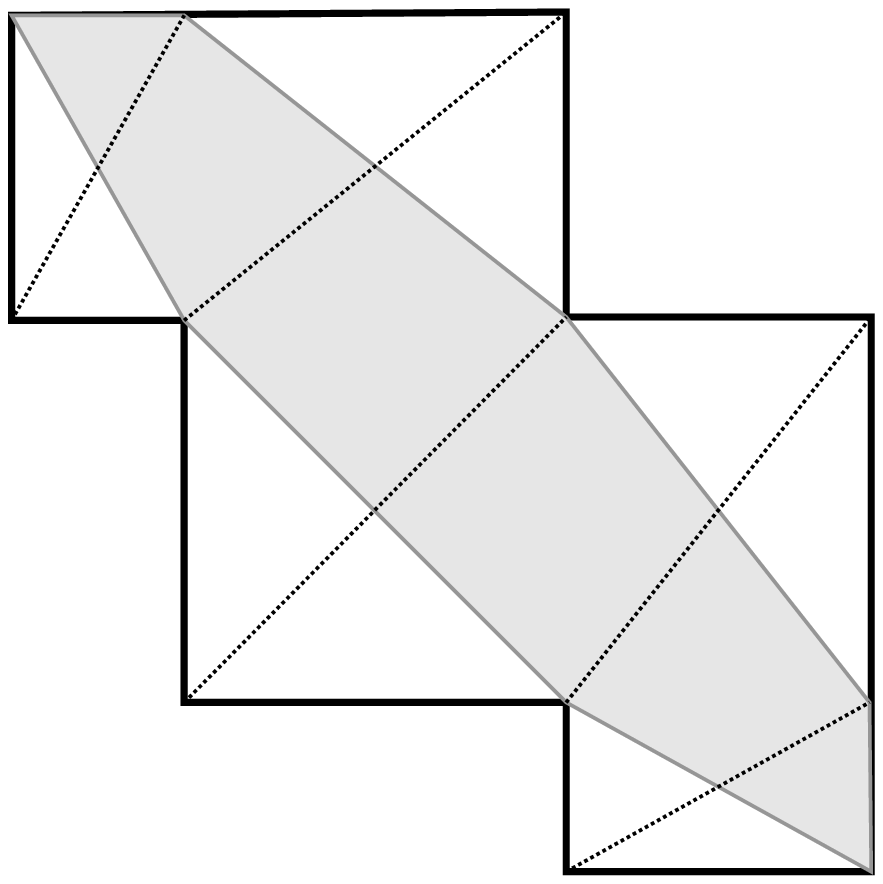}
    \caption{A staircase surface for $n=7$, with diagonals of each rectangle marked in dashed lines. The slopes of these dashed diagonals are given by Theorem \ref{theorem:staircasediagonals}}
    \label{fig:staircasediagonals}
\end{figure}

\begin{theorem}
\label{theorem:staircasediagonals}
    For $n$ odd, the slopes of the diagonals of the double $n$-gon staircase (the purple lines in Figure \ref{fig:staircasediagonals}) are $-v_{n-1}, -v_{n-2}, \dots, -v_{(n+1)/2}$, where $v_i = \sin \left(\frac{(1-i)\pi}{n}\right)/\sin\left(\frac{i\pi}{n}\right)$.
\end{theorem}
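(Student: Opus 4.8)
The plan is to avoid measuring the staircase directly and instead read each diagonal's slope off the hyperbolic side of the equivalence, just as in the proof of Theorem~\ref{thm:poincare}. First I would carry the stereographic setup over to the regular $n$-gon: inscribe it in the unit circle with one vertex at $-1$, project from $-1$, and use the radius $r$ given above. A boundary point $e^{i\psi}$ then maps to $f(e^{i\psi}) = 2r\,\frac{\sin\psi}{1+\cos\psi} - c = 2r\tan(\psi/2) - c$ for the appropriate recentering constant $c$, while the vertices sit at $\psi_k = \pi + 2\pi k/n$ for $k = 0,\dots,n-1$. Since $\tan(\pi/2 + x) = -\cot x$, this gives $f(V_k) = -2r\cot(k\pi/n) - c$, so that any difference of vertex images is a difference of cotangents with $c$ cancelling. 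Normalizing $c$ so that $V_{n-1}\mapsto 0$ (the analogue of the pentagon's choice, where $V_4 = e^{3i\pi/5}\mapsto 0$) forces $c = 2r\cot(\pi/n)$.

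The key geometric claim to pin down is the identification between each rectangle diagonal and a hyperbolic vertex. After the skew, the zig-zag triangulation becomes a triangulation by right triangles whose hypotenuses are the diagonals of the steps; tracking these hypotenuses through the cut-and-paste reassembly should match the diagonal indexed by $i$ (for $i \in \{(n+1)/2,\dots,n-1\}$) with the direction corresponding, under the extension of the equivalence in Theorem~\ref{thm:poincare} to the $n$-gon, to the hyperbolic vertex $V_{n-i}$. This yields slope $-f(V_{n-i}) = -v_i$, where the reflection $R(x) = -x$ accounts for the diagonals carrying positive slope while $V_1,\dots,V_g$ project to the negative axis. Matching each rectangle to its vertex uniformly in $g$ is where the real work lies, and I expect it to be the main obstacle: it requires following the combinatorics of the triangulation and reassembly rather than evaluating a single formula, which is precisely the ``complicated trigonometry on the rectangular surface'' the equivalence is meant to bypass.

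Once the identification is in hand, the slope is a short computation. I would first record that the radius simplifies: by the half-angle identity $\frac{\tan a\,\tan(a/2)}{\tan a - \tan(a/2)} = \sin a$ with $a = \pi/n$, the given $r$ reduces to $r = \tfrac12\sin(\pi/n)$. Then, using $\cot((n-m)\pi/n) = -\cot(m\pi/n)$ and $\cot\alpha - \cot\beta = \sin(\beta-\alpha)/(\sin\alpha\sin\beta)$,
\[
v_i = f(V_{n-i}) = 2r\bigl(\cot(i\pi/n)-\cot(\pi/n)\bigr)
= \frac{2r}{\sin(\pi/n)}\cdot\frac{\sin\bigl((1-i)\pi/n\bigr)}{\sin\bigl(i\pi/n\bigr)}.
\]
Since $2r/\sin(\pi/n) = 1$, this is exactly the $v_i$ of the statement, so the diagonal indexed by $i$ has slope $-v_i$. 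Letting $i$ range over $\{(n+1)/2,\dots,n-1\}$ --- the $g$ indices matched to the vertices $V_1,\dots,V_g$ --- produces the list $-v_{n-1},\dots,-v_{(n+1)/2}$. As a final sanity check I would verify the endpoints: $-v_{(n+1)/2} = 1$, the diagonal of the central square, and $-v_{n-1} = 2\cos(\pi/n)$, the steepest step (which is $\phi$ for $n=5$), confirming both the formula and the indexing.
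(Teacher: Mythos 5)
Your stereographic computation is correct and, up to bookkeeping, identical to the paper's: the simplification $r = \tfrac12\sin(\pi/n)$ together with the cotangent-difference identity gives exactly the paper's $v_i = 2r\bigl(\cot(i\pi/n)-\cot(\pi/n)\bigr) = \sin\bigl((1-i)\pi/n\bigr)/\sin\bigl(i\pi/n\bigr)$, and your endpoint checks $-v_{(n+1)/2}=1$ and $-v_{n-1}=2\cos(\pi/n)$ are right, as is the reciprocal symmetry that restricts the list to $i\in\{(n+1)/2,\dots,n-1\}$. The problem is the step you yourself flag as ``where the real work lies'': the identification of each rectangle diagonal of the staircase with a specific ideal vertex of the hyperbolic $n$-gon. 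That identification \emph{is} the content of the theorem. Without it, the trigonometry only tells you where the vertices of an ideal $n$-gon land on the real line under your projection; it says nothing yet about the staircase. As written, the proposal defers precisely the claim it needs to prove.

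The paper closes this gap by a different route from the combinatorial hypotenuse-tracking you sketch: a symmetry argument through the Veech group. Each diagonal direction of the staircase is the image of the vertical direction under a generator of the Veech group of the double $n$-gon, namely a rotation by $2\pi i/n$ followed by the horizontal Dehn twist (the skew). The rotation carries the boundary point $\infty$ to the vertex labelled $v_i$ by construction, since the vertices are equally spaced on the circle at infinity; the skew fixes the slopes $0$ and $\infty$ and, being induced by an isometry of the disk, acts on the projected line as the linear fractional transformation $x\mapsto -x$. Composing the two sends $\infty$ to $-v_i$, which is why the diagonal slopes are the $-v_i$ rather than the vertex values themselves. (The paper cites its reference for the $n=5$ case and observes that nothing in that argument is special to $5$.) To complete your proof you should either import this Veech-group argument or actually carry out the triangulation-and-reassembly matching you describe; note that the latter is likely harder than you suggest, since after the cut-and-paste the hypotenuses become the \emph{interior} diagonals of the steps and one must track which identified copy each lands in.
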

\begin{proof}
      We use an analogous stereographic projection and proof technique to that in the proof of theorem \ref{thm:poincare}, and call the values assigned to the vertices of the original hyperbolic $n$-gon $v_i$.  In this case, we fix the scale of $\ell$ and instead change the radius of the disk, so let that radius be $r$.

      We start by drawing an $n$-gon in the hyperbolic plane with vertices equally spaced around the boundary and one at $\infty$.  We designate one point on the circle, the vertex immediately counterclockwise of $\infty$, as $0$, and find the point $h_0$ to which it is projected.  The angle $0D\infty$, that is, the angle from the point $0$ to the center of the circle $D$ to the point $\infty$, is $\frac{2\pi}{n}$.  Since the triangle $0D\infty$ is isosceles, the other two angles are equal, so the angle $D\infty0$ is $\frac{1}{2}\left(\pi - \frac{2\pi}{n}\right) = \frac{\pi}{2} - \frac{\pi}{n}$.  Then the height of the triangle with hypotenuse $h_0\infty$ is $2r \tan \left(\frac{\pi}{2} - \frac{\pi}{n}\right)$.  However, $\tan (\pi/2 - x) = 1/\tan(x)$, so $h_0$ is at height $2r/\tan(\pi/n)$.
      
      We need to solve for $r$, so we choose one more value for one more point on the boundary.  We say that the point midway between $0$ and $\infty$ is $1$, motivated by the fact that the slope ``midway between" a horizontal and a vertical line is $1$.  We then use the assertion that $h_1 - h_0 = 1$ to set $$2r\left(\frac{1}{\tan (\pi/2n)} - \frac{1}{\tan(\pi/n)}\right) = 1.$$  Simplifying, this gives $$r = \frac{1}{2}\left(\frac{\tan (\pi/n) \tan (\pi/2n)}{\tan (\pi/n) - \tan (\pi/2n)}\right)$$.
      
      Now, the $i$-th vertex counterclockwise from $\infty$ will be at angle $\frac{2 \pi i}{n}$.  Performing the same calculation as for $h_0$ and $h_1$ one more time, we get $$v_i = 2r\left(\frac{1}{\tan (i\pi/n)} - \frac{1}{\tan (\pi/n)}\right).$$  By substituting in $r$, expanding out $\tan$ in terms of $\sin$ and $\cos$, and using the identity $\sin a \cos b - \cos a \sin b = \sin (a - b)$, this simplifies to $$v_i = \frac{\sin \frac{(1-i)\pi}{n}}{\sin\frac{i\pi}{n}}.$$
      
      The values $-v_i$, then, are the slopes of vectors generated by starting with $\infty$ and applying single transformations, each corresponding to one of the generators of the Veech group of the double $n$-gon: a rotation by $2\pi/n$ followed by a skew.   (This is described for $n = 5$ in \cite{Davis_2017}, but nothing in their argument is in any way special to this case and replacing $5$ with any odd $n$ in their argument gives precisely our claim.)  The values $v_i$ are the results of starting from $\infty$ and applying rotation, by construction, since they are spaced evenly around the boundary of the hyperbolic disk.
      
      The skew's action on the double $n$-gon staircase twists a given trajectory around a horizontal cylinder once, so it fixes the slopes $0$ and $\infty$, but no others.  According to \cite{Davis_2017}, it corresponds to an isometry of the hyperbolic disk, which means that its action on the boundary corresponds to a linear fractional transformation, and so it must be exactly the map $x \mapsto -x$.  Therefore, the values $-v_i$ are the result of applying first rotation and then skew.  But the slopes of the diagonals of the double $n$-gon staircase are exactly those slopes which can be generated from the vertical vector $\begin{bmatrix}0\\1\end{bmatrix}$ by applying rotations and skews.
      
      We restrict ourselves to $-v_{n-1}$ through $-v_{(n+1)/2}$ since the other $-v_i$s are the multiplicative inverses of these.
\end{proof}

\begin{corollary}\label{cylcor}
    Assuming a staircase surface has been uniformly stretched so that the central rectangle is square, as in 
    Figure \ref{fig:staircasecylinders}, each cylinder has aspect ratio $2\cos{\frac{\pi}{n}}$.
    \label{theorem:staircasecylinders}
\end{corollary}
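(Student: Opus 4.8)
The plan is to work in the normalization already used in Theorem~\ref{theorem:staircasediagonals}, namely the frame in which the central rectangle is a square, and to compute each cylinder's aspect ratio directly from the diagonal slopes found there. First I would record the consequence of that theorem for the individual building-block rectangles: the $i$-th rectangle has diagonal slope $-v_i=\sin((i-1)\pi/n)/\sin(i\pi/n)$, so its width-to-height ratio is $1/(-v_i)=\sin(i\pi/n)/\sin((i-1)\pi/n)$. In particular the central rectangle, indexed $i=g+1$, has $-v_{g+1}=1$, so it is genuinely a square; this is exactly the normalization hypothesis of the corollary, confirming that ``stretched so that the central rectangle is square'' is the same frame in which the slopes $-v_i$ were computed.

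The key structural step is to read off from the zig-zag triangulation and cut-and-paste of Section~\ref{sec:staircase} how these rectangles assemble into the $g$ horizontal cylinders. I would argue that the $j$-th cylinder, counted from the bottom, is the side-by-side union (equal height, widths adding) of the two rectangles with indices $a=g+2-j$ and $b=g+1+j$, with the second rectangle absent in the top cylinder $j=g$. Granting this, the common height cancels and the aspect ratio of cylinder $j$ is the sum of the two width-to-height ratios,
\[
\frac{\sin((g+2-j)\pi/n)}{\sin((g+1-j)\pi/n)}+\frac{\sin((g+1+j)\pi/n)}{\sin((g+j)\pi/n)}.
\]

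The finish is pure trigonometry. Since $n=2g+1$, the reflection identities $\sin((g+1+j)\pi/n)=\sin((g-j)\pi/n)$ and $\sin((g+j)\pi/n)=\sin((g+1-j)\pi/n)$ rewrite the second term so that both terms share the denominator $\sin((g+1-j)\pi/n)$; the numerator becomes $\sin((g+2-j)\pi/n)+\sin((g-j)\pi/n)$, and sum-to-product collapses this to $2\cos(\pi/n)\sin((g+1-j)\pi/n)$. Dividing gives aspect ratio $2\cos(\pi/n)$ for every $j$ with $1\le j\le g-1$. The top cylinder $j=g$ is even easier: it is the single rectangle $i=2$, whose ratio is $\sin(2\pi/n)/\sin(\pi/n)=2\cos(\pi/n)$ outright. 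Notice that the two indices always satisfy $a+b=n+2$, and it is precisely this pairing that makes the sine sum telescope to a single $2\cos(\pi/n)$.

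I expect the main obstacle to be the structural claim, not the trigonometry: one must justify that each horizontal cylinder is exactly the equal-height pair of rectangles indexed $g+2-j$ and $g+1+j$. This is a bookkeeping argument about the double $(2g+1)$-gon---matching each hypotenuse to its centrally symmetric partner and tracking where the resulting right triangles land after the skew---and it is where the staircase combinatorics genuinely enter. Once that pairing is in hand, the value $2\cos(\pi/n)$ drops out uniformly for every cylinder, which simultaneously reproves that all cylinders share a common modulus and pins that modulus down, so no separate appeal to the Veech/equal-moduli machinery is needed.
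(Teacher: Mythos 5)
Your proposal is correct, and it takes a genuinely different route from the paper. The paper's proof is two lines: it asserts that all cylinders have the same aspect ratio because the staircase is a uniform affine image of the regular $(2g+1)$-gon's cylinder decomposition, and then computes a single cylinder (the one that is a lone rectangle), whose ratio is $1/(-v_2) = -v_{n-1} = 2\cos\frac{\pi}{n}$. You instead compute every cylinder from scratch: pairing the rectangles with indices $a = g+2-j$ and $b = g+1+j$ (so $a+b = n+2$), using the reflection identity $\sin\frac{k\pi}{n} = \sin\frac{(n-k)\pi}{n}$ and sum-to-product to collapse the sum of width-to-height ratios to $2\cos\frac{\pi}{n}$ uniformly in $j$. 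I checked your index bookkeeping and trigonometry against the $n=5$ and $n=7$ staircases and they are right; your pairing also uses each of the $2g-1$ rectangles exactly once and is consistent with the recurrence $s(k)+s(k-2)=x\,s(k-1)$ used in the appendix. The trade-off is clear: the paper's argument is shorter but leans on the equal-moduli property of the decomposition, which it asserts rather than proves; your argument is longer and still owes the reader the structural claim about which rectangles assemble into which cylinder (which you correctly flag as the real content), but in exchange it is self-contained and \emph{reproves} the equal-moduli fact rather than assuming it. Either write-up would need to discharge its respective structural step to be fully rigorous; yours has the advantage that the step is concrete combinatorics about the zig-zag triangulation of Section \ref{sec:staircase} rather than an appeal to outside machinery.
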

\begin{proof}
    Since the staircase surface comes from a uniform skew of a regular polygon, all cylinders have the same aspect ratio.  This is, by inspecting the bottommost one, $$-v_{n-1} = \frac{\sin \frac{(n-2)\pi}{n}}{\sin \frac{(n-1)\pi}{n}} = \frac{\sin \frac{2\pi}{n}}{\sin \frac{\pi}{n}} = 2 \cos \frac{\pi}{n}.$$
\end{proof}

Some examples of the slopes we can calculate in this way, which we have also numerically verified by more direct computation, can be found in table \ref{table:slopes}.

\begin{table}[ht]
    \begin{tabular}{ll}
    $n$        & Slopes $\ge 1$                                                \\
    5          & 1, 1.6180                                           \\
    7          & 1, 1.2470, 1.8019                                  \\
    9          & 1, 1.1372, 1.3473, 1.8794                           \\
    11         & 1, 1.0882, 1.2036, 1.3979, 1.9190                 \\
    13         & 1, 1.0617, 1.1361, 1.2411, 1.4270, 1.9419        \\
    15         & 1, 1.0457, 1.0982, 1.1654, 1.2643, 1.4451, 1.9563
    \end{tabular}
    \caption{Slopes of diagonals of the generalized staircase figure for small $n$, calculated both directly and by using theorem \ref{theorem:staircasediagonals} and verified to match up to the first four decimal places. There are some additional slopes not included here, but those slopes are all $<1$ and of the form $1/s$ where $s$ is a slope on the chart.}
    \label{table:slopes}
\end{table}

\begin{theorem}
    For any $n$, there exist unique matrices $\sigma_1, \dots, \sigma_{n-1}$ such that:
    \begin{enumerate}
        \item $\sigma_i^{-1}$ takes $\begin{bmatrix}1 \\ -v_i\end{bmatrix}$ to a horizontal vector in the positive $x$-direction and $\begin{bmatrix}1 \\ -v_{i+1}\end{bmatrix}$ to a vertical one in the positive $y$-direction (where $\begin{bmatrix}1 \\ \infty \end{bmatrix}$ is taken to be $\begin{bmatrix}0 \\ 1 \end{bmatrix}$),
        \item the periodic trajectories on the generalized staircase figure have slopes $\sigma_{n_1}\sigma_{n_2}\cdots\sigma_{n_k}\begin{bmatrix}1\\0\end{bmatrix}$, and
        \item $\det \sigma_i = 1$.
    \end{enumerate}
\end{theorem}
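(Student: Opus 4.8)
The plan is to build the matrices directly from the sector--boundary directions and then verify the three numbered properties, saving uniqueness for last because that is where the real content sits. A slope $-v_i$ is carried by the direction vector $\begin{bmatrix}1\\-v_i\end{bmatrix}$, with the stated convention $\begin{bmatrix}1\\\infty\end{bmatrix}=\begin{bmatrix}0\\1\end{bmatrix}$ absorbing the vertical case $i=n-1$ (where $v_n=\infty$). I would therefore set
\[
\sigma_i=\begin{bmatrix} s_i & t_i\\ -s_iv_i & -t_iv_{i+1}\end{bmatrix},\qquad s_i,t_i>0,
\]
so that the two columns point along $\begin{bmatrix}1\\-v_i\end{bmatrix}$ and $\begin{bmatrix}1\\-v_{i+1}\end{bmatrix}$. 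With this shape property (1) is automatic for \emph{every} choice of $s_i,t_i>0$: since $\sigma_i\begin{bmatrix}1\\0\end{bmatrix}=s_i\begin{bmatrix}1\\-v_i\end{bmatrix}$, the inverse sends $\begin{bmatrix}1\\-v_i\end{bmatrix}$ to $\tfrac1{s_i}\begin{bmatrix}1\\0\end{bmatrix}$, a positive horizontal vector, and likewise $\begin{bmatrix}1\\-v_{i+1}\end{bmatrix}\mapsto\tfrac1{t_i}\begin{bmatrix}0\\1\end{bmatrix}$.

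For property (3) I would compute $\det\sigma_i=s_it_i(v_i-v_{i+1})$. From the proof of Theorem \ref{theorem:staircasediagonals} we have $-v_i=2r\left(\cot\frac{\pi}{n}-\cot\frac{i\pi}{n}\right)$, which is strictly increasing in $i$ because $\cot$ decreases on $(0,\pi)$; hence $v_i-v_{i+1}>0$, the determinant is positive, and the single equation $s_it_i=1/(v_i-v_{i+1})$ forces it to equal $1$. This is exactly the first place the conditions interact, and it already reveals the difficulty: property (1) pins down only the directions of the columns and property (3) pins down only the product $s_it_i$, so together they leave a one--parameter residual freedom $\sigma_i\mapsto\sigma_i\,\mathrm{diag}(\gamma,\gamma^{-1})$ with $\gamma>0$. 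Eliminating this is the crux.

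For property (2) I would pass to the hyperbolic dictionary. Exactly as in the proof of Theorem \ref{thm:poincare}, the slopes $-v_i$ are the images, under the generalized stereographic projection, of the boundary points obtained from $\infty$ by one rotation $T$ by $2\pi/n$ followed by the skew $R\colon x\mapsto -x$; concretely $\sigma_i$ is the vector--level representative of $S_i=RT^i$. Because the double $(2g+1)$--gon is Veech, its periodic directions are precisely the cusp directions, i.e.\ the $SL(2,\mathbb{R})$ Veech--orbit of the horizontal cylinder direction $\begin{bmatrix}1\\0\end{bmatrix}$; and the generalized Tree Theorem (the argument of \cite{DL} for the golden $L$, which, as noted in the proof of Theorem \ref{theorem:staircasediagonals}, transfers verbatim with $5$ replaced by any odd $n$) states that this orbit is swept out exactly by the words $S_{n_1}S_{n_2}\cdots S_{n_k}$. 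Translating slopes to boundary points through Theorem \ref{thm:poincare} then gives property (2) for the canonical representatives (the $\gamma=1$ case).

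The hard part will be uniqueness, and I expect to resolve it using property (2) rather than (1) and (3) alone. Replacing $\sigma_i$ by $\sigma_i\,\mathrm{diag}(\gamma_i,\gamma_i^{-1})$ leaves every single--letter image $\sigma_i\begin{bmatrix}1\\0\end{bmatrix}$ of slope $-v_i$ unchanged, so the scaling is invisible to property (1). Applying $\sigma_i$ \emph{twice}, however, exposes it: the intermediate slope is rescaled from $-v_i$ to $-\gamma_i^{-2}v_i$ before the second application, so the two--letter slope varies continuously and non--trivially with $\gamma_i$. Since the periodic--slope set is rigid (it is the fixed Veech--orbit and is countable), I would argue that only $\gamma_i=1$ can keep every generated slope inside it, using $v_i\neq0$ for $i>1$; the last generator $\sigma_1$ is then pinned down by the determinant normalization together with its prescribed directions. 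The delicate step to write out carefully is this rigidity claim: one must rule out that a coordinated simultaneous rescaling of several generators conspires to reproduce the same countable set, which I would handle by tracking the shortest word on which a nontrivial $\gamma_i$ first pushes a slope off its Veech value.
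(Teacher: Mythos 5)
Your construction and your verification of the three listed properties are sound, and for property (2) you end up on the same road the paper takes: both arguments identify $\sigma_i$ with the vector-level representative of the boundary isometry $S_i=RT^i$ and then invoke the (generalized) Tree Theorem of \cite{DL} through the dictionary of Theorem \ref{thm:poincare}. The difference is packaging. The paper starts from the linear fractional transformations, takes their determinant-$1$ matrix representatives, declares properties (2) and (3) immediate, and then checks property (1) by chasing the boundary points $v_i$ as in Theorem \ref{theorem:staircasediagonals}; you start from the column conditions of property (1) and work back toward the LFTs. Your version is more explicit, and it exposes something the paper passes over: properties (1) and (3) alone fix only the column directions and the product $s_it_i$, leaving the residual family $\sigma_i\mapsto\sigma_i\,\mathrm{diag}(\gamma,\gamma^{-1})$. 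That observation is correct and is a real point against reading uniqueness off from (1) and (3), as the paper implicitly does.

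The genuine gap is exactly where you flag it: the rigidity step that eliminates $\gamma_i\neq 1$ using property (2) is asserted, not proved. For a fixed word, the rescaled two-letter slope is the image under an injective LFT of $\gamma_{n_1}^{-2}(-v_{n_2})$, so the set of $\gamma_{n_1}$ keeping that one slope periodic is the preimage of a countable set --- countable, but not obviously $\{1\}$; you must rule out a common exceptional value surviving across \emph{all} words, and your ``shortest word where a slope first leaves the Veech orbit'' heuristic does not by itself do that. You also need to fix which containment property (2) asserts (every periodic slope is generated, every generated slope is periodic, or both), since a rescaled system could in principle generate a proper subset rather than an extraneous slope. Until that step is written, you have proved existence of matrices satisfying (1)--(3) but not that they are the only ones. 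Note that the paper's own proof does not close this either --- it obtains uniqueness only in the weaker sense that a given LFT has a unique determinant-$1$ representative up to sign --- so the right fix is either to carry out your rigidity argument in full or to restate the uniqueness clause so that the $\sigma_i$ are characterized as the matrices of the specific transformations $S_i$.
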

\begin{proof}
      Consider the hyperbolic disk with the stereographic projection on its edge as in theorem \ref{theorem:staircasediagonals}.  Then the operations in this disk which generate all the points on the edge corresponding to periodic trajectory slopes on the $n$-gon are isometries of the projective plane (rotations and reflections), so, in terms of the stereographic projection on the boundary, they can be written as linear fractional transformations.  There are $n-1$ of them, each corresponding to a rotation of $\frac{2\pi i}{n}$ followed by a reflection.
      
      Any linear fractional transformation that operates on vector slopes can be identified with a determinant-$1$ matrix that operates on vectors themselves.  This gives us our $\sigma_i$s and we immediately get properties 2 and 3.  For property 1, consider the vectors $\begin{bmatrix}1 \\ -v_i \end{bmatrix}$.  The slopes are $-v_i$ and $-v_{i-1}$.  The operation $\sigma_i^{-1}$ will first perform a reflection to get the slopes $v_i$ and $v_{i-1}$, which are, respectively, $\frac{2\pi i}{n}$ and $\frac{2 \pi (i-1)}{n}$ radians away from the point $0$ by construction (as in the proof of theorem \ref{theorem:staircasediagonals}), and then rotate them $\frac{2\pi i}{n}$ radians to get the slopes $0$ and $\infty$ as required.
\end{proof}

\section{Future Work}
There are a number of potential avenues for future work in this area.  Our results for the slopes of diagonals of the double $n$-gon staircase, and our methods for proving them, bear similarities to the more abstract-algebraic approach of \cite{Calta_Smillie_2007}.  Further investigation is necessary to determine if these similarities are superficial or if they reveal deeper patterns.  At the same time, we note that the equivalence between the hyperbolic and staircase representations of periodic slopes goes both ways, but our results are restricted to facts about the staircase representation proved by way of the hyperbolic representation.  It may also be possible to derive results about the hyperbolic plane by means of known theorems centering around the staircase representation; whether this can be done is still an (admittedly vaguely-phrased) open problem.

\appendix
\section{Trigonometric Identities and Minimal Polynomials}

\begin{figure}[ht]
    \centering
    \includegraphics[width=0.7\linewidth]{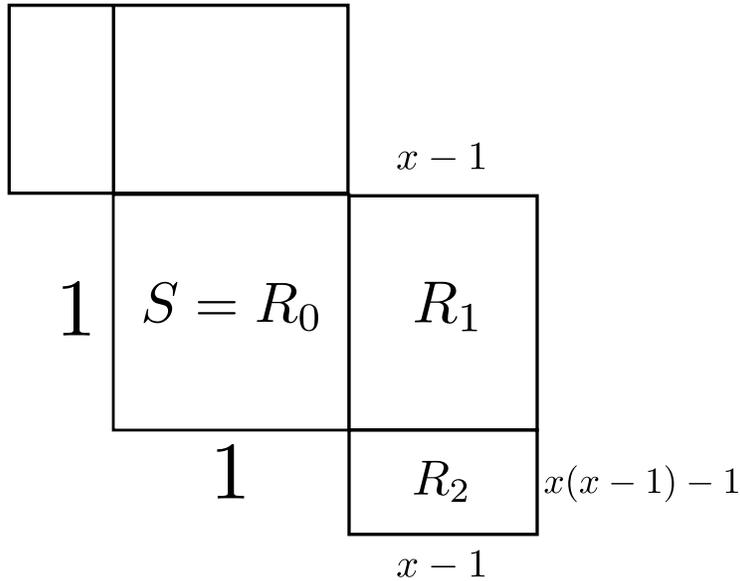}
    \caption{Staircase cylinders}
    \label{fig:scyl}
\end{figure}

We now have formulae for $v_i$ in terms of trigonometric functions, and can plug them into various facts that we can learn without using trigonometry at 
all.  These give new trigonometric identities.  First,we can expand out $$v_i = \frac{\tan (\pi/2n)}{\tan (\pi/n) - \tan (\pi/2n)}\left(\frac{\tan 
(\pi/n)}{\tan (i\pi/n)}-1\right).$$ 
Using this formula, we note that, by symmetry, $v_i = \frac{1}{v_{n+1-i}}$, and $\tan (\pi - x) = - \tan x$, so $$\frac{\tan (\pi/2n)}{\tan (\pi/n) - \tan
(\pi/2n)}\left(\frac{\tan (\pi/n)}{\tan (i\pi/n)}-1\right) = -\frac{\tan ((i-1)\pi/n)}{\tan (\pi/n) + \tan ((i-1)\pi/n)}\left(\frac{\tan (\pi/n)}{\tan 
(\pi/2n)}-1\right)$$ for $n$ odd.

Moreover, consider the aspect ratio of the cylinders in the double n-gon staircase. We start by fixing the central square $S$ to have side length $1$, and let $x>1$ be the aspect ratio of cylinders (long:short). Let the staircase be decomposed into rectangles as shown in figure \ref{fig:scyl}, and let $\ell(k),s(k)$ denote the length of the long and short sides of rectangle $R_k$. 

We first establish that $s(0)=1$, and $s(1)=x-1$ by considering a cylinder containing the central square. For $k\ge 2$, consider the cylinder formed by the rectangles $R_{k-1},R_{k}$, where the short side of $R_{k-1}$ meets the long side of $R_{k}$. The cylinder has short side $s(k-1)$, and long side $s(k)+s(k-2)$. By preservation of aspect ratio, we have $s(k)+s(k-2)=x(s(k-1))$, $s(k)=xs(k-1)-s(k-2)$. Let $P_k(x)$ denote the sequence of polynomials that satisfy the above recurrence with the same initial conditions, i.e., $P_0(x)=1, P_1(x)=x-1, P_k(x)=xP_{k-1}(x)-P_{k-2}(x)$.

For the double $(2m+1)$-gon staircase, the outermost rectangle is $R_{m-1}$, with short side $s(m-1)$ and long side $\ell(m-1)=s(m-2)$. Since the outermost rectangle is a complete cylinder, we have $\ell(m-1)=xs(m-1)$, which gives $0=xs(m-1)-s(m-2)=P_m(x)$. From corollary \ref{cylcor}, $P_{(n-1)/2}(-v_{n-1}) = P_{(n-1)/2}\left(2 \cos \frac{\pi}{n}\right)$.

Since the constant term of this is $1$, and since it is a polynomial of degree $(n-1)/2$ in $\cos \frac{\pi}{n}$, this gives the minimal polynomial of $\cos \frac{\pi}{n}$.  For instance, setting $n=5$ gives $$0=(-v_4)^2-(-v_4)-1 = 4 \cos^2 \frac{\pi}{5} - 2 \cos \frac{\pi}{5} - 1$$ while $n=7$ gives $$0 = (-v_6)^3 - (-v_6)^2 - 2(-v_6) + 1 = 8 \cos^3 \frac{\pi}{7} - 4 \cos^2 \frac{\pi}{7} - 4 \cos \frac{\pi}{7} + 1.$$

\printbibliography
% \cite{Davis_2017}
% \cite{DFT}
% \cite{DL}
% \cite{Hubert}
% \cite{AlexSurvey}
% \cite{Calta_Smillie_2007}
% \cite{Lee_2010}

\end{document}